\documentclass[12pt]{amsart}

\usepackage{amsthm}
\newtheorem{theorem}{Theorem}[section]
\newtheorem{lemma}[theorem]{Lemma}

\theoremstyle{definition}

\theoremstyle{remark}

\counterwithin*{section}{part}

\usepackage{amssymb}
\usepackage{empheq}
\usepackage{stmaryrd}
\usepackage{enumerate}
\usepackage[shortlabels]{enumitem}
\usepackage{calc}
\usepackage{url}
\usepackage{comment}

\newcommand{\CAT}{{\rm{CAT}(0)}}

\def\EE{\mathbb{E}}
\def\PP{\mathbb{P}}

\usepackage[left=2.0cm,%
                right=2.0cm,%
                top=2.5cm,%
                bottom=3.5cm,%
                headheight=12pt,%
                a4paper]{geometry}%

\begin{document}

\title[Weak convergence of the stochastic proximal point method]{Weak convergence of the stochastic proximal point method in metric spaces}

\author[N. Pischke]{Nicholas Pischke}
\date{\today}
\maketitle
\vspace*{-5mm}
\begin{center}
{\scriptsize 
Department of Computer Science, University of Bath,\\
Claverton Down, Bath, BA2 7AY, United Kingdom.\\
E-mail: nnp39@bath.ac.uk}
\end{center}

\maketitle
\begin{abstract}
We prove the almost sure weak convergence of a stochastic proximal point method for minimizing a convex integral function in the general nonlinear context of complete geodesic metric spaces of nonpositive curvature (so-called Hadamard spaces), solving a problem of M. Ba\v{c}\'ak. This method, formulated in the context of a mild growth condition on the function which generalizes Lipschitz continuity, was previously only considered in the context of strong metric regularity conditions or in the context of locally compact spaces (all of which immediately induce strong convergence). In particular, this result is novel already over Hilbert spaces. The proof is a combination of a weak almost sure convergence theorem for stochastic processes in Hadamard spaces which confine to a stochastic variant of quasi-Fej\'er monotonicity, due to previous work of the author, together with a new argument for proving the almost sure convergence of the mean function values of the process towards the minimal value.
\end{abstract}

\noindent
{\bf Keywords:} Proximal point algorithm; stochastic approximation; weak convergence; Hadamard spaces\\ 
{\bf MSC2020 Classification:} 47J25, 90C15, 90C25, 62L20

\section{Introduction}

\subsection{Background and motivation}
The problem of minimizing a convex integral function, that is solving the problem
\[
\min_{x\in X}\int f(e,x)\,d\mu(e),
\]
for a given normal convex integrand $f:E\times X\to (-\infty,+\infty]$ (see \cite{Rockafellar1971}) on a complete probability space $(E,\mathcal{E},\mu)$ and some target Hilbert space $X$, is one of the most important general formulations of stochastic approximation. Motivated by the seminal proximal point algorithm for approximating minimizers of ``ordinary'' convex functions on such spaces, which goes back to the work of Rockafellar \cite{Rockafellar1976}, Martinet \cite{Martinet1970} as well as Br\'ezis and Lions \cite{BrezisLions1978}, one of the prevalent modern tools for approaching this problem is the stochastic proximal point method
\[
x_{n+1}:=\mathrm{argmin}_{y\in X}\left\{ f(\xi_{n+1},y)+\frac{1}{2\lambda_n}\lVert x_n-y\rVert^2\right\},
\]
formulated over an auxiliary probability space $(\Omega,\mathcal{F},\PP)$, given a starting point $x_0\in X$, a sequence of parameters $(\lambda_n)\subseteq (0,\infty)$ with certain growth conditions and a sequence $(\xi_{n+1})$ of random variables $\xi_{n+1}:\Omega\to E$ which are independent and identically distributed (i.i.d.)\ with (common) distribution $\mu$. 

The basic concepts of this method go back at least to the work of Bertsekas \cite{Bertsekas2011} where it was formulated in the context of finite probability spaces $(E,\mathcal{E},\mu)$ and over Euclidean spaces $X=\mathbb{R}^d$, establishing the convergence of the method under suitable assumptions on the surrounding parameters, in particular including the assumption that any $f(e,\cdot)$ is $L$-Lipschitz for a fixed constant $L>0$. The method and corresponding convergence results are further extended in \cite{WangBertsekas2013,WangBertsekas2015}, to the above general problem formulation and beyond. Since then, this iteration and its variants were widely studied and we refer to \cite{AsiChadhaChengDuchi2020,AsiDuchi2019,Bertsekas2012,Bianchi2016,EisenmannStillfjordWilliamson2022,NemirovskiJuditskyLanShapiro2009,RyuBoyd}, among many others, for various such discussions (in particular regarding their complexities). 

Extensions of these tools from (stochastic) convex analysis to nonlinear settings are of high practical relevance, particularly because of the extensive developments of machine learning in recent years, where optimization over nonlinear spaces such as manifolds plays a key role (see e.g.\ \cite{ZhangSra2016}). However, the need for suitable methods in nonlinear contexts is not only driven through optimization on manifolds or other domains with differentiable structure, but also in particular by spaces with much sparser structure naturally occurring in applications, such as e.g.\ the Billera-Holmes-Vogtmann tree space \cite{BilleraHolmesVogtmann2001} prominently used in phylogenetics (see also e.g.\ \cite{Bacak2014b}).

In this paper, we study the stochastic proximal point method in the context of the general class of geodesic metric spaces with nonpositive curvature, as introduced in the work of Alexandrov \cite{Aleksandrov1951}. Also known as $\CAT$ spaces, following the work of Gromov \cite{Gromov1987}, these spaces uniformly cover spaces such as Hilbert spaces, $\mathbb{R}$-trees and Hadamard manifolds (i.e.\ complete simply connected Riemannian manifolds of nonpositive sectional curvature) as well as the previously mentioned Billera-Holmes-Vogtmann tree space or the Hilbert ball, and further involved examples. Authoritative references for geodesic and $\CAT$ spaces are in particular the works \cite{AlexanderKapovitchPetrunin2023,BridsonHaefliger1999} as well as \cite{Bacak2014a}, with the latter providing a shorter treatment focused in particular on convex analysis and optimization.

The deterministic proximal point method was lifted to the setting of Hadamard spaces (that is \emph{complete} geodesic metric spaces with nonpositive curvature) by Ba\v{c}\'ak \cite{Bacak2013}, relying on metric variants of the proximal mappings. In particular, the work \cite{Bacak2013} established weak convergence in Hadamard spaces of this deterministic proximal point method. Naturally, by G\"uler's seminal work \cite{Gueler1991}, this is the most one can hope for already in Hilbert spaces.

The stochastic proximal point method, as sketched over Hilbert spaces above, was extended to the setting of (separable) Hadamard spaces in the work \cite{Bacak2018}, building on preceding work \cite{Bacak2014b} on a splitting proximal point method with random order for finite sums of convex functions over similar spaces.

\subsection{Main results and related work}

We first fix the formal framework. In similarity to the above, let $(\Omega,\mathcal{F},\PP)$ and $(E,\mathcal{E},\mu)$ be probability spaces, with $(E,\mathcal{E},\mu)$ complete, and let $X$ now be a separable Hadamard space. In analogy to \cite{Rockafellar1971} (see also \cite{CastaingValadier1977}), let $f:E\times X\to (-\infty,+\infty]$ be a normal convex integrand, i.e.\ $f(e,\cdot)$ is proper, lower-semicontinuous (lsc) and convex\footnote{Given a Hadamard space $X$, recall that a function $f:X\to (-\infty,+\infty]$ is called lsc if $\liminf_{n\to\infty} f(x_n)\geq f(x)$ whenever $x_n\to x$ in $X$, and convex if $f\circ \gamma$ is convex for any geodesic in $X$.} for all $e\in E$ and $f$ is $\mathcal{E}\otimes\mathcal{B}(X)$-measurable. Define the proximal map of $f$ via
\[
\mathrm{prox}_{\lambda}^f(e,x):=\mathrm{argmin}_{y\in X}\left\{ f(e,y)+\frac{1}{2\lambda}d^2(x,y)\right\},
\]
which is well-defined for all $e\in E$, $x\in X$ and $\lambda>0$ (see e.g.\ \cite{Jost1995,Mayer1998}). Further, $\mathrm{prox}_{\lambda}^f(e,\cdot)$ is nonexpansive for any $e\in E$ and $\lambda>0$ (see e.g.\ Lemma 4 in \cite{Jost1995}), and also $\mathrm{prox}_{\lambda}^f(\cdot,x)$ is measurable for any $x\in X$ and $\lambda>0$. Hence, $\mathrm{prox}_{\lambda}^f$ is a Carath\'eodory function and so in particular $\mathcal{E}\otimes\mathcal{B}(X)$-measurable (see e.g.\ Lemma 8.2.6 in \cite{AubinFrankowska2009}).

The stochastic proximal point method is then given by the iteration
\[
x_{n+1}:=\mathrm{prox}^f_{\lambda_n}(\xi_{n+1},x_n),\tag{SPPA}\label{RPPA}
\]
given, as before, a starting point $x_0\in X$ and sequences $(\lambda_n)$ of positive reals as well as $(\xi_{n+1})$ of variables $\Omega\to E$, for which we assume that 
\[
(\xi_{n+1})\text{ is i.i.d.\ with distribution $\mu$ and }\sum_{n\in\mathbb{N}}\lambda_n=+\infty, \sum_{n\in\mathbb{N}}\lambda_n^2<+\infty.\tag{A1}\label{parameters}
\]
The work \cite{Bacak2018} in particular relies on a certain weak growth condition on the integrand introduced therein, which is a generalization of many of the common growth conditions from the literature: Assume there exists a positive function $L\in L^2(E,\mu)$ and a point $p\in X$ such that
\[
f(e,x)-f(e,y) \leq L(e)(1+d(x,p))d(x,y)\tag{A2}\label{growth}
\]
for all $x,y\in X$ and almost all $e\in E$. We refer to \cite{Bacak2018} for further discussions of this condition as well as on the benefits of the proximal point method at large compared to methods such as gradient descent (see also e.g.\ \cite{Bertsekas2011}), especially in contexts such as Hadamard spaces, where usual stochastic gradient methods cannot be used without additional differential structure that one would normally have access to on e.g.\ manifolds. Beyond that, we refer to the excellent exposition in \cite{Bacak2013,Bacak2014b,Bacak2018} for further discussions on these methods and relations other works.

The only previous \emph{general} convergence result on the stochastic proximal point method in Hadamard spaces formulated above, that is \emph{without} any further regularity assumptions on $f$ or its mean such as strong convexity or weak sharp minima,\footnote{We refer e.g.\ to the recent work \cite{PischkePowell2026} which provides quantitative results for the proximal point method considered herein under stronger regularity assumptions, such as weak sharp minima (and beyond), relying on rather general considerations on stochastic quasi-Fej\'er monotonicity in a metric context (see also the related \cite{NeriPischkePowell2025}).} is a strong convergence result in the context of a local compactness assumption, given in \cite{Bacak2018} (see also \cite{Bacak2023}). 

\begin{theorem}[Theorem 3.1 in \cite{Bacak2018}]\label{RPPAstrongConv}
Let $(E,\mathcal{E},\mu)$ and $(\Omega,\mathcal{F},\PP)$ be probability spaces, with $(E,\mathcal{E},\mu)$ complete, and let $X$ be a locally compact Hadamard space. Let $f:E\times X\to (-\infty,+\infty]$ be a normal convex integrand such that $F(x):=\int f(e,x)\,d\mu(e)$ is proper and $\mathrm{argmin}F\neq\emptyset$. Let $(x_n)$ be the iteration given by \eqref{RPPA}, and assume \eqref{parameters} as well as \eqref{growth}.

Then $(x_n)$ a.s.\ strongly converges to an $\mathrm{argmin}F$-valued random variable.
\end{theorem}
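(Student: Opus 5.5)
The plan follows the classical Robbins--Siegmund route, adapted to Hadamard spaces; local compactness is used at the end to turn Fej\'er-type information into strong convergence.

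\textbf{Step 1: the key one-step inequality.} Fix $z\in\mathrm{argmin}F$ and write $x_{n+1}=\mathrm{prox}^f_{\lambda_n}(\xi_{n+1},x_n)$. The standard variational inequality for metric proximal maps comes from the strong convexity of $d^2(x_n,\cdot)$ in $\CAT$ spaces, evaluated along the geodesic from $x_{n+1}$ to $z$. It gives
\[
d^2(x_{n+1},z)\leq d^2(x_n,z)-2\lambda_n\bigl(f(\xi_{n+1},x_{n+1})-f(\xi_{n+1},z)\bigr)-d^2(x_n,x_{n+1}).
\]
We must replace $x_{n+1}$ by $x_n$ in the function value, because only $x_n$ is independent of $\xi_{n+1}$. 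To do so, apply \eqref{growth}:
\[
f(\xi_{n+1},x_n)-f(\xi_{n+1},x_{n+1})\leq L(\xi_{n+1})(1+d(x_n,p))\,d(x_n,x_{n+1}).
\]
Separately, minimality of $x_{n+1}$ compared with the choice $y=x_n$, combined with \eqref{growth}, yields $d(x_n,x_{n+1})\leq 2\lambda_n L(\xi_{n+1})(1+d(x_n,p))$. Combining these with Young's inequality and using $d(x_n,p)\leq d(x_n,z)+d(z,p)$ gives
\[
d^2(x_{n+1},z)\leq (1+C\lambda_n^2L(\xi_{n+1})^2)\,d^2(x_n,z)-2\lambda_n\bigl(f(\xi_{n+1},x_n)-f(\xi_{n+1},z)\bigr)+C\lambda_n^2L(\xi_{n+1})^2,
\]
where $C$ depends only on $d(z,p)$.

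\textbf{Step 2: conditional expectations and Robbins--Siegmund.} Let $\mathcal{F}_n=\sigma(\xi_1,\dots,\xi_n)$. Since $x_n$ is $\mathcal{F}_n$-measurable and $\xi_{n+1}$ is independent of $\mathcal{F}_n$ with law $\mu$, we get $\EE[f(\xi_{n+1},x_n)-f(\xi_{n+1},z)\mid\mathcal{F}_n]=F(x_n)-F(z)\geq0$. We also have $\EE[L(\xi_{n+1})^2]=\|L\|_2^2<\infty$. Because $\sum\lambda_n^2<\infty$, the Robbins--Siegmund lemma applies and gives, almost surely, two conclusions: $d(x_n,z)$ converges, and $\sum\lambda_n(F(x_n)-F(z))<\infty$.

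\textbf{Step 3: the main obstacle.} The statement for a single $z$ is not enough; we need the good event to be uniform over $z$. Since $X$ is separable, choose a countable dense subset $D$ of $\mathrm{argmin}F$, which is closed and convex. Intersecting the countably many full-measure events obtained from Step 2 for $z\in D$, and using that $|d(x_n,z)-d(x_n,z')|\leq d(z,z')$, we obtain one full-measure event on which $d(x_n,z)$ converges for \emph{every} $z\in\mathrm{argmin}F$. On this event $(x_n)$ is bounded. Moreover, because $\sum\lambda_n=\infty$ and $\sum\lambda_n(F(x_n)-\min F)<\infty$, we get $\liminf_n F(x_n)=\min F$.

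\textbf{Step 4: extracting the limit.} By local compactness together with completeness (Hopf--Rinow for geodesic spaces), bounded closed sets are compact. Hence a subsequence $x_{n_k}$ converges to some $\bar x$ along which $F(x_{n_k})\to\min F$. The mean $F$ is lsc, by Fatou's lemma applied via the growth bound \eqref{growth}, which gives an integrable lower bound locally. Lower semicontinuity then gives $F(\bar x)\leq\min F$, so $\bar x\in\mathrm{argmin}F$. Finally, $d(x_n,\bar x)$ converges by Step 3 and tends to $0$ along the subsequence, so $x_n\to\bar x$.

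It remains to check that the limit $\lim x_n$ is a random variable, i.e.\ measurable. This holds because it is a pointwise a.s.\ limit of measurable maps into the separable space $X$.
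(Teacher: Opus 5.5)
Your proof is correct. The paper does not prove this statement itself; it is quoted from \cite{Bacak2018}. Your argument is nonetheless essentially what the paper's machinery yields:
\begin{itemize}
\item Step 1 is Lemma~\ref{IneqMain} before taking expectations. You keep the term $-d^2(x_n,x_{n+1})$ coming from the strong convexity of $f(\xi_{n+1},\cdot)+\frac{1}{2\lambda_n}d^2(x_n,\cdot)$ and absorb the cross term by Young's inequality. The paper instead drops that term and bounds $f(\xi_{n+1},x_n)-f(\xi_{n+1},x_{n+1})$ via the displacement estimate. Both give the same form of inequality.
\item Step 2 is the Robbins--Siegmund core of items (1) and (3) of Lemma~\ref{weakConvergence}.
\item Steps 3--4 are a hands-on version of items (3) and (5) of that lemma, with properness of $X$ (Hopf--Rinow) supplying a strong cluster point in $\mathrm{argmin}F$.
\end{itemize}

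The point worth noticing is that you only need $\liminf_n F(x_n)=\min F$. This is immediate from $\sum\lambda_n=+\infty$ and $\sum\lambda_n(F(x_n)-\min F)<+\infty$. Local compactness then gives one strong cluster point that is a minimizer, and Fej\'er monotonicity upgrades this to convergence of the whole sequence.

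Without local compactness, item (4) of Lemma~\ref{weakConvergence} requires \emph{every} weak cluster point to be a minimizer. That needs the full limit $F(x_n)\to\min F$, which is exactly what the new Lemma~\ref{LipschitzSum} supplies in the proof of Theorem~\ref{RPPAweakConv}. Your route does not need it.

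A minor remark: under \eqref{growth} and properness, $F$ is finite everywhere and locally Lipschitz, so its lower semicontinuity needs no Fatou argument.
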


In this paper, we prove the weak convergence of the stochastic proximal point method as formulated in \cite{Bacak2018} in the context of general (separable) Hadamard spaces. 

\begin{theorem}\label{RPPAweakConv}
Let $(E,\mathcal{E},\mu)$ and $(\Omega,\mathcal{F},\PP)$ be probability spaces, with $(E,\mathcal{E},\mu)$ complete, and let $X$ be a separable Hadamard space. Let $f:E\times X\to (-\infty,+\infty]$ be a normal convex integrand such that $F(x):=\int f(e,x)\,d\mu(e)$ is proper and $\mathrm{argmin}F\neq\emptyset$. Let $(x_n)$ be the iteration given by \eqref{RPPA}, and assume \eqref{parameters} as well as \eqref{growth}.

Then $(x_n)$ a.s.\ weakly converges to an $\mathrm{argmin}F$-valued random variable.
\end{theorem}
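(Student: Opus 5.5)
The plan is to derive a stochastic quasi-Fejér inequality for $(x_n)$ with respect to $\mathrm{argmin}F$, and then invoke the abstract weak convergence theorem for stochastic quasi-Fejér processes in Hadamard spaces from the author's previous work. That theorem needs two inputs. First, for every $z\in\mathrm{argmin}F$, $d(x_n,z)$ a.s.\ converges. Second, a.s.\ every weak ($\Delta$-)cluster point of $(x_n)$ lies in $\mathrm{argmin}F$. The second is where the novel argument on mean function values enters.

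\textbf{Step 1: the basic inequality.} For $z\in\mathrm{argmin}F$, the prox characterization in Hadamard spaces (from the strong convexity of $d^2(\cdot,\cdot)$ in CAT(0)) gives
\[
d^2(x_{n+1},z)\le d^2(x_n,z)-2\lambda_n\bigl(f(\xi_{n+1},x_{n+1})-f(\xi_{n+1},z)\bigr)-d^2(x_n,x_{n+1}).
\]
Write $f(\xi_{n+1},x_{n+1})=f(\xi_{n+1},x_n)-(f(\xi_{n+1},x_n)-f(\xi_{n+1},x_{n+1}))$. By \eqref{growth} the correction is at most $L(\xi_{n+1})(1+d(x_n,p))d(x_n,x_{n+1})$. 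Apply Young's inequality against the $-d^2(x_n,x_{n+1})$ term, which leaves $\lambda_n^2L(\xi_{n+1})^2(1+d(x_n,p))^2$. Now take conditional expectation w.r.t.\ $\mathcal{F}_n=\sigma(\xi_1,\dots,\xi_n)$, using independence and $L\in L^2$. This yields
\[
\EE[d^2(x_{n+1},z)\mid\mathcal{F}_n]\le(1+C\lambda_n^2)d^2(x_n,z)+C\lambda_n^2-2\lambda_n\bigl(F(x_n)-\min F\bigr).
\]
Robbins--Siegmund then gives that $d(x_n,z)$ a.s.\ converges for each $z$. Separability of $X$ upgrades this to a single null set for all $z$, by taking a countable dense subset of $\mathrm{argmin}F$. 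It also gives $\sum\lambda_n(F(x_n)-\min F)<\infty$ a.s.; this requires the $x_n$ to lie in $\mathrm{dom}\,F$, or one first handles the case $F(x_0)=\infty$ via one step.

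\textbf{Step 2: from summability to convergence of function values (main obstacle).} Since $\sum\lambda_n=\infty$, Step 1 only yields $\liminf F(x_n)=\min F$. Weak lower semicontinuity of the convex lsc function $F$ then gives only that \emph{some} weak cluster point is a minimizer. The point is to show $F(x_n)\to\min F$ a.s. My first attempt is to control the increments $F(x_{n+1})-F(x_n)$. Integrating \eqref{growth} over $e$ gives
\[
F(x_{n+1})-F(x_n)\le \bar L(1+d(x_n,p))d(x_n,x_{n+1})
\]
with $\bar L=\int L\,d\mu$. The prox step size satisfies $d(x_n,x_{n+1})\le\lambda_nL(\xi_{n+1})(1+d(x_n,p))$, since a minimizer cannot move further than the local slope allows. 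Because $d(x_n,p)$ is a.s.\ bounded, the positive increments of $F(x_n)$ are therefore bounded by $C\lambda_nL(\xi_{n+1})$. The sum of these increments diverges, so this alone is not enough. Instead I would decompose $L(\xi_{n+1})=\bar L+(L(\xi_{n+1})-\bar L)$. The martingale $\sum\lambda_n(L(\xi_{n+1})-\bar L)$ converges a.s.\ because $\sum\lambda_n^2<\infty$. The classical argument then applies: a sequence $a_n\ge 0$ with $\sum\lambda_na_n<\infty$, $\sum\lambda_n=\infty$, and $a_{n+1}-a_n\le C\lambda_n+(\text{convergent martingale increments})$ satisfies $a_n\to0$. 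Indeed, each upcrossing from below $\varepsilon$ to above $2\varepsilon$ requires $\lambda$-mass of order $\varepsilon$, during which $a_n\ge\varepsilon$, and this contradicts summability. Handling the stochastic bound on $d(x_n,p)$ by localizing on events $\{\sup_n d(x_n,p)\le M\}$ should make this rigorous.

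\textbf{Step 3: conclusion.} With $F(x_n)\to\min F$ a.s., any weak cluster point $x$ of $(x_n)$ satisfies $F(x)\le\liminf F(x_n)=\min F$. This uses that convex lsc functions on Hadamard spaces are weakly lsc, since their sublevel sets are closed and convex, hence weakly closed. So all weak cluster points lie in $\mathrm{argmin}F$ a.s. Combined with Step 1, the stochastic quasi-Fejér weak convergence theorem (an Opial-type argument on the full-measure event) yields a.s.\ weak convergence to an $\mathrm{argmin}F$-valued random variable. The measurability of the limit is provided by that theorem.
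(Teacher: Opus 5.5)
Your proposal is correct and follows essentially the same route as the paper. The paper's argument has the same four steps. First, the quasi-Fej\'er inequality is fed into the stochastic Fej\'er weak-convergence theorem, which also yields $\sum_n\lambda_n(F(x_n)-\min F)<\infty$ and a.s.\ boundedness. Second, $F(x_n)\to\min F$ a.s.\ is obtained via the split $L(\xi_{n+1})=\bar L+(L(\xi_{n+1})-\bar L)$, together with a.s.\ convergence of $\sum_n\lambda_n(L(\xi_{n+1})-\bar L)$ (Kolmogorov). Third, an Alber--Iusem--Solodov-type block argument finishes that step. Finally, weak lower semicontinuity of $F$ puts all weak cluster points in $\mathrm{argmin}F$.

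A few harmless details. Applying (A2) with $x=x_{n+1}$, $y=x_n$ gives the factor $1+d(x_{n+1},p)$ rather than $1+d(x_n,p)$; this does not matter since $\sup_n d(x_n,p)<\infty$ a.s. Your $\mathrm{dom}\,F$ caveat is moot, because (A2) forces $F$ to be finite everywhere. Your localization can be done purely pathwise with the random constant $\sup_n(1+d(x_n,p))^2$.
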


This in particular solves Problem 6.6 of Ba\v{c}\'ak \cite{Bacak2023}. Again, by the results of G\"uler \cite{Gueler1991}, this is best possible already in Hilbert spaces.

Further, as the stochastic proximal point method generalizes the splitting proximal point method with random order given in \cite{Bacak2014b} (see Theorem 3.7 therein), we also get the following solution to Problem 6.4 of Ba\v{c}\'ak \cite{Bacak2023}:\footnote{The choice to allow general $x, y \in X$ in \eqref{growth} is purely for simplicity. Throughout the paper, it would suffice to assume this condition only along the sequence $(x_n)$, akin to \cite{Bacak2014b}. In that context, it is then immediate to see that the assumption \eqref{growth} in Theorem \ref{SplitPPAweak} generalizes the growth condition used in Theorem 3.7 in \cite{Bacak2014b}. We however stay with this formulation of \eqref{growth} here, for simplicity.}

\begin{theorem}\label{SplitPPAweak}
Let $(\Omega,\mathcal{F},\PP)$ be a probability space and let $X$ be a separable Hadamard space. Let $F(x):=\sum_{k=1}^Nf_k(x)$, where each $f_k:X\to (-\infty,+\infty]$ is proper convex lsc, and where we assume that $\mathrm{argmin}F\neq\emptyset$. Let $(x_n)$ be the iteration given by 
\[
x_{n+1}:=\mathrm{prox}^{f_{r_{n+1}}}_{\lambda_n}(x_n)
\]
where $(r_{n+1})$ is a sequence of independent random variables which attain values in $\{1,\dots,N\}$ according to the uniform distribution. Assume \eqref{parameters} and \eqref{growth}.\footnote{\label{splitPPAfootnote}By \eqref{growth}, we here mean the condition that $f_k(x) - f_k(y) \leq L(k)(1 + d(x, p))d(x, y)$ for all $k = 1, \dots , N$ and $x, y \in X$, for some $L(k) > 0$, see also the proof of Theorem \ref{SplitPPAweak} given later on.}

Then $(x_n)$ a.s.\ weakly converges to an $\mathrm{argmin}F$-valued random variable.
\end{theorem}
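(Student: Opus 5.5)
Theorem \ref{SplitPPAweak} will be derived as a special case of Theorem \ref{RPPAweakConv}; the real content lies in checking that the finite-sum setting fits the framework of the general result.

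We take $E:=\{1,\dots,N\}$ with $\mathcal{E}:=2^E$ and $\mu$ the uniform distribution. This is trivially a complete probability space, since the only null set is $\emptyset$. We set $f(k,x):=f_k(x)$. Measurability of $f$ with respect to $\mathcal{E}\otimes\mathcal{B}(X)$ is immediate: for a Borel set $B\subseteq(-\infty,+\infty]$, the preimage $f^{-1}(B)=\bigcup_{k}\{k\}\times f_k^{-1}(B)$ is a finite union of measurable rectangles, because each $f_k$ is lsc and hence Borel. Each $f(k,\cdot)=f_k$ is proper, convex and lsc, so $f$ is a normal convex integrand. Further, $\int f(e,x)\,d\mu(e)=\frac1N\sum_{k=1}^N f_k(x)=\frac1N F(x)$. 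Its minimizers coincide with $\mathrm{argmin}F\neq\emptyset$, and it is proper because $F$ is. One small point: properness of $F$ must be read as part of the hypothesis, or deduced from $\mathrm{argmin}F\neq\emptyset$ together with the convention that the argmin is taken of a proper function. Finally, $\xi_{n+1}:=r_{n+1}$ is i.i.d. with distribution $\mu$, and the iteration coincides with \eqref{RPPA}.

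The growth condition needs a short check. The footnote's assumption $f_k(x)-f_k(y)\le L(k)(1+d(x,p))d(x,y)$ with $L(k)>0$ defines a positive function $L$ on the finite set $E$. Such a function is bounded, so $L\in L^2(E,\mu)$, and \eqref{growth} holds for all $e$.

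A rescaling issue arises from the factor $\frac1N$. We do not need to remove it, since the iteration only uses $\mathrm{prox}^{f_k}_{\lambda_n}$, which is exactly $\mathrm{prox}^f_{\lambda_n}(k,\cdot)$. The target set is $\mathrm{argmin}\frac1N F=\mathrm{argmin}F$.

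With all hypotheses verified, Theorem \ref{RPPAweakConv} gives almost sure weak convergence of $(x_n)$ to an $\mathrm{argmin}F$-valued random variable. There is no real obstacle here; the only delicate points are the measurability and completeness bookkeeping above, which is trivial because $E$ is finite.
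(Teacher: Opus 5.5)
Your proposal is correct and is exactly the paper's argument: take $E=\{1,\dots,N\}$ with $\mathcal{E}=2^E$ and the uniform measure, set $f(k,x):=f_k(x)$, and apply Theorem \ref{RPPAweakConv}. Your extra bookkeeping (completeness, measurability, $L\in L^2$, the factor $\frac1N$) is accurate.

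On your one hedge about properness, no convention is needed. If $f_k(y_0)<+\infty$, then \eqref{growth} gives $f_k(x)\le f_k(y_0)+L(k)(1+d(x,p))d(x,y_0)<+\infty$ for every $x$. So each $f_k$ is finite everywhere, hence so is $F$, and in particular $F$ is proper.
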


It should be noted that according to \cite{Bacak2023}, both results are new even in Hilbert spaces which seems to be true, to our knowledge. Indeed, at the level of general separable Hilbert spaces $X$ without any further regularity assumptions, the only other result on the method \eqref{RPPA} that we are aware of derives from the work of Bianchi \cite{Bianchi2016}, which studies the broader stochastic proximal point algorithm induced by the iteration
\[
x_{n+1}:=J_{\lambda_n}(\xi_{n+1},x_n)
\]
of the resolvents $J_\lambda(e,x):=(\mathrm{Id}+\lambda A(e,\cdot))^{-1}(x)$ of general stochastically perturbed monotone set-valued operators $A:E\times X\to 2^X$, with $(\xi_{n+1})$ and $(\lambda_n)$ as before. For this method, under suitable assumptions (including \eqref{parameters}, next to additional uniform integrability requirements), the work of \cite{Bianchi2016} then establishes weak ergodic convergence of $(x_n)$, that is $\overline{x}_n:=\sum_{k=0}^n\lambda_kx_k/\sum_{k=0}^n\lambda_k$ converges weakly, to a zero of the mean operator $\underline{A}(x):=\int A(s,x)\,d\mu(s)$, where the integral refers to the Aumann integral \cite{Aumann1965}.\footnote{The work of Bianchi \cite{Bianchi2016} also discusses stronger assumptions on the operator $A$ under which almost sure strong convergence can be guaranteed. In particular, under the assumption of strong monotonicity, the results of Bianchi \cite{Bianchi2016} have recently been extended to randomly perturbed monotone vector fields on (suitable) Hadamard spaces in \cite{Pischke2025}. Whether the weak ergodic convergence of \cite{Bianchi2016} extends to this setting of \cite{Pischke2025} however remains open.} This applies to the above method \eqref{RPPA} by taking $A=\partial f$, with $\partial f$ being the (stochastic) subgradient of $f$ (see also the discussions in \cite{Bianchi2016}). It in fact follows from results of Brezis and Lions \cite{BrezisLions1978} that this result of Bianchi cannot be extended to (weak) almost sure convergence for general operators $A$ in Hilbert spaces. In that way, the present results in particular show that this obstacle can be overcome for $A=\partial f$, for suitable $f$.

Our argument is based on two key ingredients: The first is a general (weak) convergence theorem for stochastic processes in Hadamard spaces which confine to a stochastic variant of quasi-Fej\'er monotonicity (see Lemma \ref{weakConvergence} later on), derived in recent work of the author \cite{Pischke2026}. This result, which essentially represents an applied implementation of the seminal Robbins-Siegmund theorem on almost-supermartingale convergence \cite{RobbinsSiegmund1971} (on which also the proof crucially relies), extends similar such general stochastic convergence theorems obtained by Combettes and Pesquet \cite{CombettesPesquet2015} over Hilbert spaces to the setting of Hadamard spaces, and simultaneously extends related deterministic convergence theorems over Hadamard spaces obtained by Ba\v{c}\'ak, Searston and Sims \cite{BacakSearstonSims2012} to the stochastic setting. In particular, the result relies on a nonlinear variant of Pettis' theorem \cite{Pettis1938}, similarly derived in \cite{Pischke2026}. Over Hilbert spaces, this result from \cite{Pischke2026} is not necessary and one can fully rely on the stochastic convergence theorem of Combettes and Pesquet obtained in \cite{CombettesPesquet2015}.

The second key ingredient is a new convergence result for a certain stochastic recursive inequality, used for proving the almost sure convergence of the mean function values of the process towards the minimal value (see Lemma \ref{LipschitzSum} later on). This result can be considered as a stochastic extension of a similar deterministic result due to Alber, Iusem and Solodov \cite{AlberIusemSolodov1998} (discussed in more detail later, see also \cite{NeriPowell2023}), and in particular combines some of the arguments found there with Kolmogorov's two-series theorem (see Lemma \ref{Kolmogorov} later on). This in particular is related but ultimately slightly different from a similar stochastic variant of the result of Alber, Iusem and Solodov \cite{AlberIusemSolodov1998} derived by Geiersbach and Pflug \cite{GeiersbachPflug2019} (see Proposition 3.2 therein). That variant was used in \cite{GeiersbachPflug2019} to show the weak almost sure convergence of a projected stochastic gradient algorithm for convex and $L^2$-Fr\'echet differentiable objective functionals (with convex constraint sets). While this is similar in spirit to the present paper, the work \cite{GeiersbachPflug2019} not only focuses on a different algorithm but is in particular phrased for separable Hilbert spaces, and in that context relies on an additive representation of stochastic errors (as also manifested in the use of Proposition 3.2 therein), whereas our result is particularly tailored to use in a nonlinear setting such as that described above.

Even though we only consider the stochastic proximal point method over Hadamard spaces as given in \cite{Bacak2018} as well as its special case from \cite{Bacak2014b} in the present paper, we think that the general approach taken here will be of use for the convergence analysis of further methods from stochastic convex optimization already over Hilbert spaces, but in particular also over nonlinear spaces. 

Over Hilbert spaces, this could perhaps include variants of proximal-gradient algorithms as e.g.\ studied in  \cite{GeiersbachScarinci2021} (among many other references), where however contrary to the methods studied therein, both the gradient and the proximal step are stochastic. Such a type of method was recently studied by Madariaga \cite{Madariaga2026}, which however ultimately relied on a strong regularity condition on the functions (see Assumption 3.2 in \cite{Madariaga2026}) that, among other things, yields strong almost sure convergence.

On the nonlinear side, one particular example we want to mention here is the Busemann subgradient method introduced recently by Goodwin, Lewis, L\'opez-Acedo and Nicolae \cite{GoodwinLewisLopezAcedoNicolae2024}, and its extension for stochastic minimization as considered in \cite{Pischke2026}, as well as combinations of that method with a stochastic proximal step in the style of a proximal-gradient method similar to the above.

\section{Preliminaries}

We now discuss the few preliminary definitions, results and notations that we require throughout. As mentioned in the introduction, beyond the results indicated here, we refer to \cite{AlexanderKapovitchPetrunin2023,Bacak2014a,BridsonHaefliger1999} for a comprehensive overview of geodesic metric spaces and their properties, in particular to \cite{Bacak2014a} for aspects of (stochastic) optimization. Further, we refer to e.g.\ \cite{Klenke2020} for a standard textbook on probability theory.

Let $(X,d)$ be a metric space. A geodesic is an isometry $\gamma:[0,l]\to X$. We say that it joins $x=\gamma(0)$ and $y=\gamma(l)$ (where necessarily $l=d(x,y)$). $X$ is called (uniquely) geodesic is every two points are joined by a (unique) geodesic. A geodesic metric space $(X,d)$ is called a $\CAT$ space (also called a space of nonpositive curvature in the sense of Alexandrov) if it satisfies 
\[
d^2(\gamma(tl),x)\leq (1-t)d^2(\gamma(0),x)+td^2(\gamma(l),x)-t(1-t)d^2(\gamma(0),\gamma(l))
\]
for all $x\in X$, all $t\in [0,1]$ and all geodesics $\gamma:[0,l]\to X$ (that is, an extension of the so-called Bruhat-Tits $\mathrm{CN}$-inequality \cite{BruhatTits1972} to geodesics). Any $\CAT$ space is uniquely geodesic and a complete $\CAT$ space is called a Hadamard space.

Weak convergence in $\CAT$ spaces goes back to the work of Jost \cite{Jost1994} and is often called $\Delta$-convergence following the work of Kirk and Panyanak \cite{KirkPanyanak2008} (we refer in particular to the discussion in \cite{Bacak2013} on that matter). We define weak convergence here as follows (see e.g.\ \cite{Bacak2014a}):

Given a bounded sequence $(x_n)\subseteq X$ and a point $x\in X$, their asymptotic radius is given by
\[
r(x_n,x):=\limsup_{n\to\infty}d^2(x_n,x)
\]
and the general asymptotic radius of the sequence $(x_n)$ is given by
\[
r(x_n):=\inf_{x\in X}r(x_n,x).
\]
A point $x\in X$ is called an asymptotic center of $(x_n)$ if $r(x_n,x)=r(x_n)$. In Hadamard spaces, asymptotic centers exist and are unique (see e.g.\ Proposition 7 in \cite{DhompongsaKirkSims2006}).

We say that $(x_n)$ weakly converges to $x\in X$, written $x_n\to^w x$, if $x$ is the asymptotic center of each subsequence of $(x_n)$. A point $x\in X$ is a weak cluster point of $(x_n)$ if there is a subsequence $(x_{n_k})$ of $(x_n)$ with $x_{n_k} \to^w x$.

We write $\mathfrak{W}(x_n)$ for the set of all weak cluster points of $(x_n)$ and $\mathfrak{S}(x_n)$ for the set of all strong cluster points of $(x_n)$, defined as usual using the metric.

Throughout this paper, we now fix a separable Hadamard space $(X,d)$ and two probability spaces, $(\Omega,\mathcal{F},\PP)$ and $(E,\mathcal{E},\mu)$, with $(E,\mathcal{E},\mu)$ complete. All probabilistic notions such as measurability, random variables, almost sureness (a.s.), expectation, etc., are understood relative to the space $(\Omega,\mathcal{F},\PP)$, if not stated otherwise. In particular, an $X$-valued random variable is a map $x:\Omega\to X$ which is measurable relative to $\mathcal{F}$ and the Borel $\sigma$-algebra $\mathcal{B}(X)$ of that space. We denote (conditional) expectations over $(\Omega,\mathcal{F},\PP)$ by $\EE$. All properties as well as (in-)equalities between random variables are understood to hold only almost surely, if not stated otherwise.

\section{Main results}

\subsection{Key lemmas}

As outlined in the introduction, the first main technical ingredient we need is a general result on the weak convergence of stochastic quasi-Fej\'er monotone sequences in metric spaces.

For that, and for the rest of the paper, we use the following notation (similar to \cite{Pischke2026}): Let $\mathsf{F}=(\mathsf{F}_n)$ be a given filtration of $\mathcal{F}$, that is a sequence of sub-$\sigma$-algebras of $\mathcal{F}$ such that $\mathsf{F}_n\subseteq\mathsf{F}_m$ for $n\leq m$. We write $\ell_+(\mathsf{F})$ for the set of sequences of non-negative real-valued random variables $(e_n)$ that are adapted to the filtration, i.e.\ where $e_n$ is $\mathsf{F}_n$-measurable for all $n\in\mathbb{N}$. Further, we write $\ell^1_+(\mathsf{F})$ for the set of all $(e_n)\in \ell_+(\mathsf{F})$ such that $\sum_{n\in\mathbb{N}}e_n<+\infty$ a.s.

\begin{lemma}[Proposition 4.3 in \cite{Pischke2026}]\label{weakConvergence}
Let $X$ be a separable Hadamard space and let $Z\subseteq X$ be a nonempty closed subset of $X$ and let $\phi:[0,+\infty)\to [0,+\infty)$ be strictly increasing such that $\lim_{t\to +\infty}\phi(t)=+\infty$. Let $\mathsf{F}=(\mathsf{F}_n)$ be a filtration and let $(x_n)$ be a sequence of $X$-valued random variables adapted to $\mathsf{F}$ such that it is stochastically quasi-Fej\'er monotone w.r.t.\ $Z$, that is for any $z\in Z$ there are $(\chi_n(z)),(\eta_n(z))\in\ell^1_+(\mathsf{F})$ and $(\theta_n(z))\in\ell_+(\mathsf{F})$ such that for all $n\in\mathbb{N}$:
\[
\EE[\phi(d(x_{n+1},z))\mid \mathsf{F}_n]\leq (1+\chi_n(z)) \phi(d(x_n,z))-\theta_n(z)+\eta_n(z)\text{ a.s.}\tag{$*$}\label{randFejer}
\]
Then we have the following assertions:
\begin{enumerate}
\item $\sum_{n\in\mathbb{N}}\theta_n(z)<+\infty$ a.s.\ for all $z\in Z$.
\item $(x_n)$ is bounded a.s.
\item There exists a set $\widetilde{\Omega}$ with $\PP(\widetilde{\Omega})=1$ such that for all $\omega\in\widetilde{\Omega}$ and $z\in Z$, the sequence given by $d(x_n(\omega),z)$ converges.
\item If $\mathfrak{W}(x_n)\subseteq Z$ a.s., then $(x_n)$ weakly converges a.s.\ to a $Z$-valued random variable.
\item If $\mathfrak{S}(x_n)\cap Z\neq\emptyset$ a.s., then $(x_n)$ strongly converges a.s.\ to a $Z$-valued random variable.
\end{enumerate}
\end{lemma}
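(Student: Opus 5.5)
The plan is to treat the five assertions in order: first reduce everything to the Robbins--Siegmund theorem \cite{RobbinsSiegmund1971} applied pointwise in $z$, then use separability to obtain a single full-measure set, and finally run a deterministic Opial-type argument in the style of Ba\v{c}\'ak, Searston and Sims \cite{BacakSearstonSims2012} on that set.

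\emph{Assertions (1)--(3).} Fix $z\in Z$. Inequality \eqref{randFejer} is exactly the almost-supermartingale setting of Robbins--Siegmund with nonnegative adapted $V_n:=\phi(d(x_n,z))$ and summable $(\chi_n(z)),(\eta_n(z))$. Hence a.s.\ $V_n$ converges to a finite limit and $\sum_n\theta_n(z)<+\infty$, which is (1).

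Since $\phi$ is strictly increasing with $\phi(t)\to+\infty$, boundedness of $(V_n)$ forces $d(x_n,z)$ to be bounded. This gives (2) after fixing one $z\in Z$.

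Convergence of $V_n$ also transfers to convergence of $d_n:=d(x_n,z)$, with no continuity of $\phi$ needed. Suppose $d_n$ had two cluster values $a<b$, and pick $a<c'<c''<b$. Then infinitely often $V_n\le\phi(c')$ and infinitely often $V_n\ge\phi(c'')>\phi(c')$, which contradicts convergence of $V_n$.

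For the uniform null set in (3), take a countable dense $D\subseteq Z$ (possible since $X$ is separable) and intersect the countably many full-measure sets. This gives $\widetilde\Omega$ on which $d(x_n(\omega),z')$ converges for every $z'\in D$. For general $z\in Z$, the triangle inequality gives $\limsup_n d(x_n,z)-\liminf_n d(x_n,z)\le 2d(z,z')$ for every $z'\in D$, and this bound can be made arbitrarily small.

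\emph{Assertions (4) and (5).} Fix $\omega\in\widetilde\Omega$ such that also $\mathfrak{W}(x_n(\omega))\subseteq Z$. Since bounded sequences in Hadamard spaces have weak cluster points, there is at least one. Suppose $u,v\in Z$ are both weak cluster points, with $x_{n_k}\to^w u$ and $x_{m_k}\to^w v$. Because $d(x_n,u)$ and $d(x_n,v)$ converge along the whole sequence, uniqueness of asymptotic centers gives, if $u\neq v$,
\[
\lim_n d^2(x_n,u)=r(x_{n_k},u)<r(x_{n_k},v)=\lim_n d^2(x_n,v).
\]
Swapping the roles of $u$ and $v$ gives the reverse strict inequality, a contradiction. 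So $u=v$, and since every subsequence has a further weakly convergent subsequence, $x_n\to^w u$.

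For (5), if $z\in\mathfrak{S}(x_n)\cap Z$, then $d(x_n,z)$ converges and has a subsequence tending to $0$, so $x_n\to z$ strongly. Measurability of the limit is then immediate, as it is a pointwise strong limit of random variables in a separable metric space.

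\emph{Main obstacle.} I expect the hardest step to be measurability of the weak limit $x(\omega)$ in (4), since weak limits are not pointwise metric limits. My plan is the ``nonlinear Pettis'' route. Set $g(\omega,y):=\limsup_n d^2(x_n(\omega),y)$. For fixed $y$ this is $\mathcal{F}$-measurable as a limsup of measurable maps. For fixed $\omega$ it is continuous in $y$, indeed locally Lipschitz on the bounded set where the orbit lives, so $g$ is Carath\'eodory. By definition $x(\omega)$ is the asymptotic center of the sequence, i.e.\ the unique minimizer of $g(\omega,\cdot)$ (this uses that $x_n\to^w x$ implies $x$ is the asymptotic center of the full sequence). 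A measurable argmin theorem for Carath\'eodory integrands on a separable space with unique minimizers, e.g.\ via \cite{AubinFrankowska2009}, then yields measurability. Alternatively, one can write $\{x\in C\}$ for closed balls $C$ through countably many inequalities $\inf_{y\in C\cap D'}g(\omega,y)\le\inf_{y\in D'}g(\omega,y)$ over a countable dense $D'\subseteq X$. Finally, $x$ is $Z$-valued on the full-measure set because $x\in\mathfrak{W}(x_n)\subseteq Z$.
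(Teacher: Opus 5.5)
Your proposal is correct and follows the route the paper indicates for this cited result (Proposition 4.3 of \cite{Pischke2026}): Robbins--Siegmund applied for each fixed $z$, a countable dense subset of $Z$ to get one common null set, a Ba\v{c}\'ak--Searston--Sims-type asymptotic-center argument for the weak limit, and a nonlinear Pettis-type argument for measurability of that limit. The steps you gloss over are routine but should be stated. First, passing from ``every weak cluster point equals $u$'' to $x_n\to^w u$ is immediate because $d(x_n,u)$ converges along the whole sequence. Second, in your alternative measurability argument, the inequality $\inf_{C\cap D'}g(\omega,\cdot)\le\inf_{D'}g(\omega,\cdot)$ characterizes $x(\omega)\in C$ only because $C\cap D'$ is dense in the closed ball $C$ (as $X$ is geodesic) and $g(\omega,\cdot)$ inherits strong convexity from the $\CAT$ inequality, so minimizing sequences converge strongly to the asymptotic center.
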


In fact, we later rely on a slightly strengthened variant of item (2) in Lemma \ref{weakConvergence} above, showing that $\sup_{n\in\mathbb{N}} d(x_n,z)<+\infty$ a.s.\ for some (any) $z\in Z$. Indeed, this follows immediately from item (3) above, since $d(x_n,z)$ converges a.s.\ for any $z\in Z$.

It is easy to see (see e.g.\ Section 4.1 in \cite{NeriPowell2025}) that for a general nonnegative stochastic process $(X_n)$, being almost surely uniformly bounded in the sense of $\sup_{n\in\mathbb{N}} X_n<+\infty$ a.s.\ is equivalent to the statement that for any $\lambda>0$, there exists an $N>0$ such that $\PP\left(\sup_{n\in\mathbb{N}}X_n> N\right)\leq\lambda$. Following \cite{NeriPowell2025}, we call a function $\psi:(0,1)\to (0,\infty)$ that provides such an $N$ in terms of $\lambda>0$ a modulus of uniform boundedness. The following lemma just collects the fact that such a modulus exists for $X_n:=d(x_n,z)$ in the setting of Lemma \ref{weakConvergence} for later use.

\begin{lemma}\label{unifBd}
In the setting of Lemma \ref{weakConvergence}, fix $z\in Z$. Then $\sup_{n\in\mathbb{N}} d(x_n,z)<+\infty$ a.s. In particular, there exists a function $\psi:(0,1)\to (0,\infty)$ such that
\[
\PP\left(\sup_{n\in\mathbb{N}} d(x_n,z)> \psi(\lambda)\right)\leq\lambda
\]
for any $\lambda\in (0,1)$.
\end{lemma}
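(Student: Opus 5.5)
The first claim follows from item (3) of Lemma \ref{weakConvergence}. That item gives a set $\widetilde{\Omega}$ with $\PP(\widetilde{\Omega})=1$ such that, for every $\omega\in\widetilde{\Omega}$ and our fixed $z\in Z$, the real sequence $d(x_n(\omega),z)$ converges. A convergent real sequence is bounded, so $S(\omega):=\sup_{n\in\mathbb{N}}d(x_n(\omega),z)<+\infty$ for all $\omega\in\widetilde{\Omega}$, and hence $\sup_{n\in\mathbb{N}} d(x_n,z)<+\infty$ a.s.

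Next I check that $S$ is a random variable with values in $[0,+\infty]$. Each $x_n$ is $\mathcal{F}$-$\mathcal{B}(X)$-measurable and $d(\cdot,z)$ is continuous, so every $d(x_n,z)$ is a nonnegative real random variable. Then $S$, a countable supremum of these, is $\mathcal{F}$-measurable as an extended-real-valued map. In particular each event $\{S>N\}$ belongs to $\mathcal{F}$.

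For the modulus, consider the events $A_N:=\{S>N\}$ for $N\in\mathbb{N}$. They are decreasing in $N$ and $\bigcap_{N}A_N=\{S=+\infty\}$, which is a null set by the first part. Continuity from above of the probability measure $\PP$ gives $\PP(A_N)\to 0$ as $N\to\infty$. So for each $\lambda\in(0,1)$ we can define $\psi(\lambda)$ as the least $N\in\mathbb{N}$, $N\geq 1$, with $\PP(A_N)\leq\lambda$. This set is nonempty, and the least element exists without any appeal to choice. By construction $\psi(\lambda)\in(0,\infty)$ and $\PP(\sup_n d(x_n,z)>\psi(\lambda))\leq\lambda$.

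No step is a real obstacle; the only point needing care is the measurability of the supremum, which the countability of the index set settles.
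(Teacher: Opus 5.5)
Your proof is correct and follows the paper's own route: boundedness comes from the almost sure convergence of $d(x_n,z)$ given by item (3) of Lemma \ref{weakConvergence}. The modulus then comes from the standard equivalence between almost sure boundedness and the existence of a modulus of uniform boundedness. The paper only cites that equivalence (Section 4.1 of Neri--Powell), while you prove it directly via measurability of the countable supremum and continuity from above of $\PP$.
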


In fact, such a modulus can be explicitly constructed from associated quantitative data witnessing the key assumptions of Lemma \ref{weakConvergence}, and details for such constructions can be found in an abstract setting in \cite{NeriPowell2026}.

The second main ingredient of the present work is a general result on the almost sure convergence of stochastic processes satisfying a Lipschitz-type condition controlled by an i.i.d.\ process.

For that, we rely on Kolmogorov's two-series theorem:

\begin{lemma}[Theorem 12.2 in \cite{Williams1991}]\label{Kolmogorov}
Let $(X_n)$ be an independent sequence of square-integrable random variables with means $\EE[X_n]=0$ and variances $\mathsf{Var}(X_n)=\sigma_n^2$ such that $\sum_{n\in\mathbb{N}}\sigma_n^2$ converges. Then $\sum_{n\in\mathbb{N}} X_n$ converges a.s.
\end{lemma}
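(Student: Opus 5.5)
This is Kolmogorov's two-series theorem, which the paper cites from Williams \cite{Williams1991}. The plan is the standard $L^2$ martingale argument, sketched here for completeness.

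First, form the partial sums $S_n:=\sum_{k=0}^n X_k$ with the natural filtration $\mathsf{G}_n:=\sigma(X_0,\dots,X_n)$. Independence and $\EE[X_{n+1}]=0$ give $\EE[X_{n+1}\mid\mathsf{G}_n]=\EE[X_{n+1}]=0$, so $(S_n)$ is a martingale. Square-integrability and orthogonality of martingale increments (again from independence and zero means) give
\[
\EE[S_n^2]=\sum_{k=0}^n\sigma_k^2\leq\sum_{k\in\mathbb{N}}\sigma_k^2<+\infty .
\]
Hence $(S_n)$ is bounded in $L^2$, and therefore also in $L^1$.

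Second, apply Doob's martingale convergence theorem for $L^2$-bounded (or merely $L^1$-bounded) martingales. It yields that $S_n$ converges almost surely to a finite limit, which is exactly the claim that $\sum_{n\in\mathbb{N}}X_n$ converges a.s.

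If one prefers to avoid the martingale convergence theorem, an alternative route is Kolmogorov's maximal inequality,
\[
\PP\left(\max_{m\leq k\leq n}|S_k-S_m|\geq\varepsilon\right)\leq\varepsilon^{-2}\sum_{k=m+1}^n\sigma_k^2 .
\]
Letting $n\to\infty$ and then $m\to\infty$ shows that $(S_n)$ is almost surely Cauchy. The only genuine content in either route is the maximal inequality (or Doob's upcrossing or maximal inequality). Everything else is the routine variance computation, so I expect no real obstacle beyond quoting that standard inequality.
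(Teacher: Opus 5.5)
Your proof is correct and matches the cited source: the paper does not prove this lemma itself but quotes Theorem 12.2 of Williams, which is proved exactly as you do it, by observing that the partial sums form a martingale with $\EE[S_n^2]=\sum_{k\le n}\sigma_k^2$ and then applying the convergence theorem for $L^2$-bounded martingales. Your alternative via Kolmogorov's maximal inequality also works; the one step you leave implicit is that the oscillation $\sup_{j,k\geq m}|S_j-S_k|$ is nonincreasing in $m$, which upgrades its convergence to $0$ in probability to almost sure convergence.
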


Our second main ingredient now takes the following form:

\begin{lemma}\label{LipschitzSum}
Let $(\lambda_n)\subseteq (0,\infty)$ with $\sum_{n\in\mathbb{N}}\lambda_n=+\infty$ and $\sum_{n\in\mathbb{N}}\lambda_n^2<+\infty$. Assume $(\alpha_n)$ is a sequence of nonnegative square-integrable random variables which is i.i.d. Further, assume that $(\gamma_n)$ satisfies $\sup_{n\in\mathbb{N}} \gamma_n<+\infty$ a.s. Let $(\beta_n)$ be a sequence of nonnegative random variables such that $\sum_{n\in\mathbb{N}}\lambda_n\beta_n<+\infty$ a.s.\ and
\[
\beta_{n+1}-\beta_n \leq \theta \lambda_{n}\gamma_n\alpha_{n}\label{lipschitz}\tag{$+$}
\]
a.s.\ for all $n\in\mathbb{N}$, and some constant $\theta>0$. Then $\beta_n\to 0$ a.s.
\end{lemma}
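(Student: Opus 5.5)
We follow the deterministic argument of Alber, Iusem and Solodov, replacing the deterministic Lipschitz control by a stochastic one. The i.i.d.\ factors $\alpha_n$ are handled through Lemma \ref{Kolmogorov}.

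\emph{Step 1: $\liminf_n\beta_n=0$ a.s.} This follows at once from $\sum_n\lambda_n\beta_n<+\infty$ and $\sum_n\lambda_n=+\infty$. Indeed, if $\beta_n\geq\varepsilon$ for all large $n$, the first sum would diverge.

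\emph{Step 2: control of the increments.} Write $m:=\EE[\alpha_0]$, which is the common mean, finite by square integrability. Set
\[
S_n:=\sum_{k<n}\lambda_k(\alpha_k-m).
\]
By independence and $\sum_k\lambda_k^2\mathsf{Var}(\alpha_k)=\mathsf{Var}(\alpha_0)\sum_k\lambda_k^2<+\infty$, Lemma \ref{Kolmogorov} shows that $(S_n)$ converges a.s. Hence it is a.s.\ Cauchy. Fix $\omega$ in the full-measure set where this holds, where $\Gamma:=\sup_n\gamma_n(\omega)<+\infty$, where $\sum\lambda_n\beta_n<+\infty$, and where $(+)$ holds for all $n$. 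Summing $(+)$ for $k$ from $j$ to $l-1$ gives
\[
\beta_l-\beta_j\leq\theta\Gamma\sum_{k=j}^{l-1}\lambda_k\alpha_k=\theta\Gamma\Big(m\sum_{k=j}^{l-1}\lambda_k+S_l-S_j\Big).
\]
So increments of $\beta$ over a block are bounded by a constant times the $\lambda$-length of the block, plus a Cauchy tail that can be made arbitrarily small. Nonnegativity of $\alpha_k$ and $\gamma_k\geq 0$ are used here; if $\gamma_n$ might be negative, bound $\gamma_n\alpha_n$ by $\sup_n|\gamma_n|\,\alpha_n$ instead.

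\emph{Step 3: contradiction via upcrossings.} Suppose $\limsup_n\beta_n>2\varepsilon$ for some $\varepsilon>0$. By Step 1 there are infinitely many disjoint index blocks $[j_i,l_i)$ with the following properties:
\[
\beta_{j_i}<\varepsilon,\qquad \beta_{l_i}>2\varepsilon,\qquad \beta_k\geq\varepsilon\ \text{for } j_i<k\leq l_i.
\]
(Take $j_i$ to be the last index before $l_i$ with $\beta<\varepsilon$.) Choose $N$ such that $|S_l-S_j|\leq\varepsilon/(2\theta\Gamma)$ for all $l,j\geq N$. Then for blocks with $j_i\geq N$, Step 2 gives
\[
\varepsilon<\beta_{l_i}-\beta_{j_i}\leq\theta\Gamma m\sum_{k=j_i}^{l_i-1}\lambda_k+\varepsilon/2,
\]
so that $\sum_{k=j_i}^{l_i-1}\lambda_k\geq\varepsilon/(2\theta\Gamma m)$. (If $m=0$ the inequality is already contradictory.) The first term of the block has $\beta_{j_i}<\varepsilon$, so drop it. We then need the bound for $\sum_{k=j_i+1}^{l_i}\lambda_k$; this follows from the previous one once $\lambda_{j_i}\to0$, which holds since $\sum\lambda_n^2<+\infty$.

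On that shifted range $\beta_k\geq\varepsilon$, so for all large $i$,
\[
\sum_{k=j_i+1}^{l_i}\lambda_k\beta_k\geq\varepsilon\cdot\varepsilon/(4\theta\Gamma m).
\]
Summing over infinitely many disjoint blocks contradicts $\sum_n\lambda_n\beta_n<+\infty$. Hence $\limsup_n\beta_n\leq 2\varepsilon$ for every $\varepsilon>0$, and therefore $\beta_n\to0$ a.s.

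The main obstacle is Step 2: making the stochastic increment bound uniform over blocks without any integrability of $\gamma_n$. Handling it pathwise via $\sup_n\gamma_n<+\infty$ and the a.s.\ Cauchy property from Kolmogorov's theorem is exactly what avoids needing independence between $\gamma_n$ and $\alpha_n$.
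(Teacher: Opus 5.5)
Your proof is correct and is essentially the paper's argument: Kolmogorov's two-series theorem applied to $\sum_k\lambda_k(\alpha_k-m)$, followed by the Alber--Iusem--Solodov block estimate. Each upcrossing of $\beta$ from below $\varepsilon$ to above $2\varepsilon$ forces a block of $\lambda$-length bounded below, hence a contribution of order $\varepsilon^2$ to $\sum_n\lambda_n\beta_n$. The only difference is that you run the argument purely pathwise with $\Gamma=\sup_n\gamma_n(\omega)$ (take $\max\{\Gamma,1\}$ to avoid dividing by zero), which legitimately replaces the paper's positive-measure contradiction via the modulus $\psi$, the set $\Omega_4$ and the Fr\'echet inequality.
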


This result can be considered to be a stochastic variant of a result due to Alber, Iusem and Solodov \cite{AlberIusemSolodov1998} (see Proposition 2 therein), by which for sequences of nonnegative real numbers $(\lambda_n), (\beta_n)$ which satisfy $\sum_{n\in\mathbb{N}}\lambda_n=+\infty$ and $\sum_{n\in\mathbb{N}}\lambda_n\beta_n<+\infty$ as well as
\[
\beta_{n+1}-\beta_{n}\leq \theta\lambda_n\text{ for all }n\in\mathbb{N}
\]
for some constant $\theta>0$, it holds that $\beta_n\to 0$ for $n\to\infty$. While our proof crucially uses many ideas from the associated proof of this result given in \cite{AlberIusemSolodov1998}, we were not able to fully reduce our stochastic Lemma \ref{LipschitzSum} to this deterministic result. We now turn to the proof itself.

\begin{proof}[Proof of Lemma \ref{LipschitzSum}]
Let $\underline{\alpha}$ be the mean of (each) $\alpha_n$. Consider the sequence of partial sums $\sum_{k=0}^n\lambda_k(\alpha_k - \underline{\alpha})$. Then $\EE[\lambda_n(\alpha_n - \underline{\alpha})]=0$ and the variance of $\lambda_n(\alpha_n - \underline{\alpha})$ is given by $\EE[\lambda_n^2(\alpha_n - \underline{\alpha})^2]=\lambda_n^2\EE[(\alpha_n - \underline{\alpha})^2]$. As each $\alpha_n$ is square-integrable and i.i.d., it holds that $\EE[(\alpha_n - \underline{\alpha})^2]<+\infty$ and the value is independent of $n\in\mathbb{N}$. Therefore $\sum_{n\in\mathbb{N}} \lambda_n^2\EE[(\alpha_n - \underline{\alpha})^2]<+\infty$ since $\sum_{n\in\mathbb{N}}\lambda_n^2<+\infty$. It follows by Kolmogorov's two-series theorem that $\sum_{k=0}^n\lambda_k(\alpha_k - \underline{\alpha})<+\infty$ a.s.

In particular, for any $\delta>0$, there exists an $N_\delta$ such that for all $n\geq m\geq N_\delta$, it holds that
\[
\left\vert \sum_{k=m}^n\lambda_k(\alpha_k-\underline{\alpha})\right\vert\leq\delta\text{ a.s.}
\]
Hence, in particular the following holds a.s., say on a set $\Omega_0$ of measure one: For any $\delta>0$, there exists an $N_\delta$ such that for all $n\geq m\geq N_\delta$:
\[
\sum_{k=m}^n\lambda_k\alpha_k\leq \underline{\alpha}\sum_{k=m}^n\lambda_k + \delta.\label{martingale}\tag{$-$}
\]

Suppose now that $\beta_n$ does not converge to $0$ a.s. Since we have $\sum_{n\in\mathbb{N}}\lambda_n\beta_n<+\infty$ a.s., as well as $\sum_{n\in\mathbb{N}}\lambda_n=+\infty$, we have $\liminf_{n\to\infty}\beta_n=0$ a.s., say on a set of measure one $\Omega_1$. By that however, we hence do not have $\limsup_{n\to\infty}\beta_n=0$ a.s. Correspondingly, this fails on a set of positive measure $\Omega_2$, say with $\PP(\Omega_2)\geq p_0$ where $p_0\in (0,1)$. Further, suppose that \eqref{lipschitz} holds on a set of measure one $\Omega_3$. Lastly, since $\sup_{n\in\mathbb{N}} \gamma_n<+\infty$ a.s., let $\psi:(0,1)\to (0,\infty)$ be a function such that $\PP\left(\sup_{n\in\mathbb{N}}\gamma_n>\psi(\lambda)\right)\leq\lambda$. In particular, we hence have $\PP\left(\sup_{n\in\mathbb{N}}\gamma_n\leq\psi(p_0/2)\right)\geq 1-p_0/2$, and we denote the inner set by $\Omega_4$.

Fix an $\omega\in\Omega_0\cap \Omega_1\cap\Omega_2\cap\Omega_3\cap \Omega_4$. As $\omega\in \Omega_2$, there is a number $\varepsilon$ such that $\limsup_{n\to\infty} \beta_n(\omega)>\varepsilon$, so choose a strictly increasing sequence $(n_i)$ of indices such that $\beta_{n_i}(\omega)\geq \varepsilon$ for all $i\in\mathbb{N}$. As $\omega\in \Omega_1$, we have $\liminf_{n\to\infty}\beta_n(\omega)=0$, there is a strictly increasing sequence $(m_i)$ such that $\beta_{m_i}(\omega)<\varepsilon/2$. Moreover, we can choose $m_i$ such that $m_i<n_i$ and $\beta_{n}(\omega)\geq \varepsilon/2$ for all $n$ such that $m_i<n\leq n_i$. Using $\omega\in \Omega_3$ and so \eqref{lipschitz} as well as $\omega\in\Omega_4$, we 
then have
\begin{align*}
\frac{\varepsilon}{2}&\leq \beta_{n_i}(\omega)-\beta_{m_i}(\omega)= \sum_{k=m_i}^{n_i-1} (\beta_{k+1}(\omega)-\beta_k(\omega))\\
&\leq \theta\sum_{k=m_i}^{n_i-1} \lambda_{k}\gamma_k(\omega)\alpha_{k}(\omega)\leq \theta\psi(p_0/2)\sum_{k=m_i}^{n_i-1} \lambda_{k}\alpha_{k}(\omega).
\end{align*}
For $i\in\mathbb{N}$ suitably large such that $n_i>m_i\geq N_\delta$ for $\delta=\varepsilon/(4\theta\psi(p_0/2))$, the estimate \eqref{martingale} yields
\[
\frac{\varepsilon}{2}\leq \theta\psi(p_0/2)\underline{\alpha}\sum_{k=m_i}^{n_i-1}\lambda_{k} + \frac{\varepsilon}{4}.
\]
Without loss of generality, we can now assume that $\underline{\alpha}>0$, as if $\underline{\alpha}=0$, then $\alpha_n=0$ for all $n\in\mathbb{N}$ so that $(\beta_n)$ would be decreasing by \eqref{lipschitz}, which combined with $\liminf_{n\to\infty}\beta_n=0$ would yield $\lim_{n\to\infty}\beta_n=0$ already. Thus, the above yields
\[
\sum_{k=m_i}^{n_i-1}\lambda_{k} \geq \frac{\varepsilon}{4\theta\psi(p_0/2)\underline{\alpha}},
\]
and so we have 
\begin{align*}
\sum_{k=m_i}^{n_i-1}\lambda_{k}\beta_{k}(\omega)&\geq \sum_{k=m_i+1}^{n_i-1}\lambda_{k}\beta_{k}(\omega)\geq\frac{\varepsilon}{2}\sum_{k=m_i+1}^{n_i-1}\lambda_{k}\\
&=\frac{\varepsilon}{2}\sum_{k=m_i}^{n_i-1}\lambda_{k}-\frac{\varepsilon}{2} \lambda_{m_i}\geq \frac{\varepsilon^2}{8\theta\psi(p_0/2)\underline{\alpha}}-\frac{\varepsilon}{2} \lambda_{m_i}.
\end{align*}
If $i\in\mathbb{N}$ is large enough, then $\lambda_{m_i}<\varepsilon/8\theta\psi(p_0/2)\underline{\alpha}$ since $\lambda_n\to 0$. Hence, for large enough $i\in\mathbb{N}$, we get
\[
\sum_{k=m_i}^{n_i-1}\lambda_{k}\beta_{k}(\omega)\geq \frac{\varepsilon^2}{16\theta\psi(p_0/2)\underline{\alpha}},
\]
so that summing over such large enough $i\in\mathbb{N}$ where the intervals spanning from $m_i$ to $n_{i}-1$ are disjoint, we get that $\sum_{n\in\mathbb{N}}\lambda_{n}\beta_{n}(\omega)=+\infty$, hence we have shown that $\sum_{n\in\mathbb{N}}\lambda_{n}\beta_{n}=+\infty$ on $\Omega':=\Omega_0\cap \Omega_1\cap\Omega_2\cap \Omega_3\cap\Omega_4$. The set $\Omega'$ has positive measure as by the Fr\'echet inequalities, we have
\[
\PP(\Omega')\geq \sum_{i=0}^4\PP(\Omega_i)-4=\PP(\Omega_2)+\PP(\Omega_4)-1\geq p_0-p_0/2=p_0/2>0.
\]
This contradicts that $\sum_{n\in\mathbb{N}}\lambda_n\beta_n<+\infty$ holds a.s.
\end{proof}

\subsection{Derivation of the main result}

We now turn to the weak convergence result for the method \eqref{RPPA}. As in the context of Theorem \ref{RPPAweakConv}, define
\[
F(x):=\int f(e,x)\,d\mu(e)
\]
and assume that $F$ is proper. By Fatou's lemma, since $f$ is a normal convex integrand, it follows that $F$ is also lsc. Further, $F$ retains the convexity of $f$. Thereby, $F$ is even weakly lower semicontinuous (weakly lsc), i.e.\
\[
\liminf_{n\to\infty} F(x_n)\geq F(x)
\]
whenever $(x_n)\subseteq X$ and $x\in X$ such that $x_n\to^w x$, which immediately follows from the following result of Ba\v{c}\'ak:

\begin{lemma}[Lemma 3.1 in \cite{Bacak2013}]\label{weakLsc}
Let $X$ be a Hadamard space. If $f:X\to (-\infty,+\infty]$ is lsc and convex, then it is weakly lsc.
\end{lemma}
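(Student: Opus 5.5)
The plan is to reduce weak lower semicontinuity to ordinary (strong) lower semicontinuity on closed convex sublevel sets, using that weak limits of sequences lying in a closed convex set stay in that set.

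Let $x_n\to^w x$ and set $c:=\liminf_{n\to\infty} f(x_n)$. If $c=+\infty$ there is nothing to prove, so assume $c<+\infty$. Pass to a subsequence with $f(x_{n_k})\to c$. This subsequence still weakly converges to $x$, because weak convergence is inherited by subsequences: every subsequence of $(x_{n_k})$ is a subsequence of $(x_n)$ and so has asymptotic center $x$. Fix any real $t>c$. Then for all large $k$ the points $x_{n_k}$ lie in the sublevel set
\[
C_t:=\{y\in X: f(y)\leq t\}.
\]
By convexity of $f$, the set $C_t$ is convex: $f\circ\gamma$ is convex along any geodesic $\gamma$ joining points of $C_t$, so $f\leq t$ along it. By lower semicontinuity, $C_t$ is closed.

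The key step is to show that a closed convex set $C$ in a Hadamard space is weakly closed: if $y_k\in C$ and $y_k\to^w y$, then $y\in C$. For this I use the metric projection $P_C$, which exists and is unique for closed convex sets in Hadamard spaces. It satisfies the obtuse-angle inequality
\[
d^2(u,P_Cy)\leq d^2(u,y)-d^2(y,P_Cy)\quad\text{for all }u\in C.
\]
This inequality follows from the $\CAT$ inequality applied along the geodesic from $P_Cy$ to $u$, using minimality of $P_Cy$ and letting $t\to 0$. Applying it with $u=y_k$ gives
\[
r(y_k,P_Cy)\leq r(y_k,y)-d^2(y,P_Cy).
\]
Since $y$ is the unique asymptotic center of $(y_k)$, this forces $d(y,P_Cy)=0$, that is, $y\in C$.

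Applying this to $C_t$ gives $f(x)\leq t$. Letting $t\downarrow c$ yields $f(x)\leq c=\liminf_{n\to\infty} f(x_n)$, which is weak lower semicontinuity.

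The main obstacle is deriving the projection inequality cleanly from the $\CAT$ inequality. Let $\gamma$ be the geodesic from $P_Cy$ to $u$, with $\gamma(t)\in C$. Minimality gives $d^2(y,\gamma(t))\geq d^2(y,P_Cy)$, while the $\CAT$ inequality gives
\[
d^2(y,\gamma(t))\leq (1-t)d^2(y,P_Cy)+t\,d^2(y,u)-t(1-t)d^2(P_Cy,u).
\]
Combining the two, dividing by $t$, and letting $t\to 0$ gives exactly the stated inequality. The remaining steps are routine.
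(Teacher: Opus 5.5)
Your proposal is correct and is essentially the standard argument behind the cited result; the paper does not reprove this lemma but only cites Ba\v{c}\'ak. The argument is that sublevel sets $\{f\leq t\}$ are closed and convex, and closed convex sets are weakly closed because the projection inequality gives $r(y_k,P_Cy)\leq r(y_k,y)-d^2(y,P_Cy)$, which together with minimality of $r(y_k,\cdot)$ at $y$ forces $y=P_Cy$. Only that minimality is used there, not uniqueness of the asymptotic center, and the case $\liminf f(x_n)=-\infty$ is excluded automatically since it would give $f(x)\leq t$ for every real $t$.
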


In the following, we now assume that $F$ has a minimizer. We denote the set of minimizers by $\mathrm{argmin}F$ and the minimal value by $\mathrm{min}F$.

The key result on the iteration given by \eqref{RPPA} is then a quasi-Fej\'er-type inequality given in Lemma \ref{IneqMain}. In fact, this inequality is already derived in passing in \cite{Bacak2018}, but since the result only appears there in the context of a broad local compactness assumption we prove it here again for the benefit of the reader. Also, we give a slightly different argument than that given in \cite{Bacak2018}.

For that, we first require the following property of the proximal map, which is however immediate from its definition:

\begin{lemma}[see e.g.\ Lemma 2.2.23 in \cite{Bacak2014a}]\label{resLemma}
For any $\lambda>0$, $x,y\in X$ and $e\in E$:
\[
f(e,\mathrm{prox}^f_{\lambda}(e,x))-f(e,y)\leq\frac{1}{2\lambda} d^2(x,y)-\frac{1}{2\lambda}d^2(\mathrm{prox}^f_{\lambda}(e,x),y).
\]
\end{lemma}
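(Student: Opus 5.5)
The inequality follows from the minimizing property of $z:=\mathrm{prox}^f_{\lambda}(e,x)$ tested along the geodesic from $z$ to $y$, combined with the $\CAT$ inequality. We may assume $f(e,y)<+\infty$, since otherwise the right-hand side is finite while the left-hand side is $-\infty$ (note $f(e,z)<+\infty$ because $f(e,\cdot)$ is proper and $z$ is a minimizer). For $t\in(0,1)$ let $z_t$ be the point at fraction $t$ along the geodesic from $z$ to $y$, so that $d(z,z_t)=t\,d(z,y)$.

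Since $z$ minimizes $f(e,\cdot)+\frac{1}{2\lambda}d^2(x,\cdot)$, we have
\[
f(e,z)+\frac{1}{2\lambda}d^2(x,z)\leq f(e,z_t)+\frac{1}{2\lambda}d^2(x,z_t).
\]
Convexity of $f(e,\cdot)$ gives $f(e,z_t)\leq (1-t)f(e,z)+tf(e,y)$. The $\CAT$ inequality with base point $x$ gives
\[
d^2(x,z_t)\leq (1-t)d^2(x,z)+td^2(x,y)-t(1-t)d^2(z,y).
\]

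Substituting both bounds, the terms $f(e,z)$ and $\frac{1}{2\lambda}d^2(x,z)$ reduce to $t$ times themselves. This yields
\[
t\,f(e,z)+\frac{t}{2\lambda}d^2(x,z)\leq t\,f(e,y)+\frac{t}{2\lambda}d^2(x,y)-\frac{t(1-t)}{2\lambda}d^2(z,y).
\]
Dividing by $t>0$ and letting $t\to 0$ gives
\[
f(e,z)-f(e,y)\leq \frac{1}{2\lambda}d^2(x,y)-\frac{1}{2\lambda}d^2(z,y)-\frac{1}{2\lambda}d^2(x,z).
\]
Discarding the nonpositive last term gives the claim.

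The only delicate point is the finiteness bookkeeping: the subtraction and division must be legitimate. This is handled by first reducing to the case $f(e,y)<+\infty$ and observing that $f(e,z)<+\infty$, so every quantity above is a finite real number. Beyond that, the argument is routine.
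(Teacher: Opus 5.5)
Your proof is correct and is essentially the standard argument behind the cited Lemma 2.2.23 in Ba\v{c}\'ak's book; the paper itself gives no proof beyond that citation. That argument compares $z=\mathrm{prox}^f_{\lambda}(e,x)$ with the points $z_t$ on the geodesic to $y$ using convexity of $f(e,\cdot)$ and the $\CAT$ inequality, then divides by $t$ and lets $t\to 0$, and like your version it yields the slightly stronger bound with the extra term $-\frac{1}{2\lambda}d^2(x,z)$ (your finiteness bookkeeping for the case $f(e,y)=+\infty$ is also correct).
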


The key quasi-Fej\'er-type inequality then takes the form of the following Lemma \ref{IneqMain}. For that, we in the following set $\mathsf{F}_n:=\sigma(\xi_{1},\dots,\xi_{n})$ and we abbreviate $\EE[\cdot\mid\mathsf{F}_n]$ by $\EE_n$. Further, we write $\underline{L}:=\int L^2\,d\mu<+\infty$.

\begin{lemma}[essentially Ba\v{c}\'ak \cite{Bacak2018}]\label{IneqMain}
For any $y\in X$, there exists a constant $C_{y,p}>0$ such that for all $n\in\mathbb{N}$:
\[
\EE_n[d^2(x_{n+1},y)]\leq (1+2C_{y,p}\lambda_n^2\underline{L})d^2(x_n,y)-2\lambda_n(F(x_n)-F(y))+2C_{y,p}\lambda_n^2\underline{L}.
\]
\end{lemma}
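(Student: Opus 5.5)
Our plan is to start from Lemma \ref{resLemma} with $e=\xi_{n+1}(\omega)$, $x=x_n$ and $\lambda=\lambda_n$. Since $x_{n+1}=\mathrm{prox}^f_{\lambda_n}(\xi_{n+1},x_n)$, this gives pointwise
\[
d^2(x_{n+1},y)\leq d^2(x_n,y)-2\lambda_n\bigl(f(\xi_{n+1},x_{n+1})-f(\xi_{n+1},y)\bigr).
\]
The right-hand side involves $f(\xi_{n+1},x_{n+1})$, and $x_{n+1}$ depends on $\xi_{n+1}$. Conditioning on $\mathsf{F}_n$ therefore does not directly produce $F(x_{n+1})$. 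The approach is to replace $x_{n+1}$ by $x_n$ using the growth condition \eqref{growth}:
\[
f(\xi_{n+1},x_n)-f(\xi_{n+1},x_{n+1})\leq L(\xi_{n+1})(1+d(x_n,p))\,d(x_n,x_{n+1}).
\]
After this substitution the main term becomes $-2\lambda_n(f(\xi_{n+1},x_n)-f(\xi_{n+1},y))$. Here $x_n$ is $\mathsf{F}_n$-measurable and $\xi_{n+1}$ is independent of $\mathsf{F}_n$ with law $\mu$. By the freezing lemma for conditional expectations, its conditional expectation is $-2\lambda_n(F(x_n)-F(y))$. One should restrict to $y$ with $F(y)<\infty$; otherwise the inequality is trivial.

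The key remaining step is to bound the step length $d(x_n,x_{n+1})$. We would apply Lemma \ref{resLemma} with $y=x_n$, which gives
\[
\tfrac{1}{\lambda_n}d^2(x_n,x_{n+1})\leq 2\bigl(f(\xi_{n+1},x_n)-f(\xi_{n+1},x_{n+1})\bigr).
\]
Combined with \eqref{growth}, this yields $d(x_n,x_{n+1})\leq 2\lambda_n L(\xi_{n+1})(1+d(x_n,p))$. Hence the error term is at most
\[
2\lambda_n L(\xi_{n+1})(1+d(x_n,p))\,d(x_n,x_{n+1})\leq 4\lambda_n^2L(\xi_{n+1})^2(1+d(x_n,p))^2 .
\]

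Next we control $(1+d(x_n,p))^2$ by $d^2(x_n,y)$. We have $(1+d(x_n,p))^2\leq 2+2d^2(x_n,p)$ and $d^2(x_n,p)\leq 2d^2(x_n,y)+2d^2(y,p)$. So $(1+d(x_n,p))^2\leq C'_{y,p}(1+d^2(x_n,y))$ with, for example, $C'_{y,p}=\max\{4,\,2+4d^2(y,p)\}$. Taking conditional expectations, the factor $L(\xi_{n+1})^2$ becomes $\int L^2\,d\mu=\underline{L}$, again by independence. This gives the claimed inequality with $C_{y,p}:=2C'_{y,p}$, up to choosing the constant.

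The main obstacle is the justification of the conditional expectation steps. We need integrability of $f(\xi_{n+1},x_n)$, which can be handled by working with conditional expectations of quasi-integrable or nonnegative parts, or by using \eqref{growth} to bound differences. We also need measurability of $(\omega)\mapsto f(\xi_{n+1}(\omega),x_n(\omega))$, which follows from the normal integrand assumption. Beyond this, the argument is purely algebraic.
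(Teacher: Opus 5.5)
Your proposal is correct and follows essentially the same route as the paper. Both arguments take the pointwise inequality from Lemma \ref{resLemma} and split off $f(\xi_{n+1},x_n)-f(\xi_{n+1},y)$, which conditions to $F(x_n)-F(y)$ by independence. Both then bound the remaining error $f(\xi_{n+1},x_n)-f(\xi_{n+1},x_{n+1})$ via \eqref{growth} and the step-length estimate $d(x_{n+1},x_n)\leq 2\lambda_nL(\xi_{n+1})(1+d(x_n,p))$, and absorb $(1+d(x_n,p))^2$ into a multiple of $1+d^2(x_n,y)$. Your explicit constant $C_{y,p}=2\max\{4,2+4d^2(y,p)\}$ and your measurability/integrability remarks are consistent with the paper's argument, and slightly more detailed than it.
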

\begin{proof}
Given $n\in\mathbb{N}$ and $y\in X$, Lemma \ref{resLemma} implies
\[
d^2(x_{n+1},y)\leq d^2(x_n,y)-2\lambda_n[f(\xi_{n+1},x_{n+1})-f(\xi_{n+1},y)]
\]
and so we immediately have
\begin{align*}
\EE_n[d^2(x_{n+1},y)]&\leq d^2(x_n,y)-2\lambda_n\EE_n[f(\xi_{n+1},x_{n+1})-f(\xi_{n+1},y)]\\
&=d^2(x_n,y)-2\lambda_n\EE_n[f(\xi_{n+1},x_n)-f(\xi_{n+1},y)]\\
&\qquad+2\lambda_n\EE_n[f(\xi_{n+1},x_n)-f(\xi_{n+1},x_{n+1})]\\
&=d^2(x_n,y)-2\lambda_n[F(x_n)-F(y)]\\
&\qquad+2\lambda_n\EE_n[f(\xi_{n+1},x_n)-f(\xi_{n+1},x_{n+1})]
\end{align*}
where the third equality follows by independence of $\xi_{n+1}$ and $x_n$, as well as the fact that $\xi_{n+1}$ has distribution $\mu$. Now, note that Lemma \ref{resLemma} together with \eqref{growth} yield
\begin{align*}
d^2(x_{n+1},x_n)&\leq 2\lambda_n[f(\xi_{n+1},x_n)-f(\xi_{n+1},x_{n+1})]\\
&\leq 2\lambda_nL(\xi_{n+1})(1+d(x_n,p))d(x_n,x_{n+1})
\end{align*}
so that we have $d(x_{n+1},x_n)\leq 2\lambda_nL(\xi_{n+1})(1+d(x_n,p))$. Using \eqref{growth}, we further have
\begin{align*}
f(\xi_{n+1},x_n)-f(\xi_{n+1},x_{n+1})&\leq L(\xi_{n+1})(1+d(x_n,p))d(x_n,x_{n+1})\\
&\leq 2\lambda_nL^2(\xi_{n+1})(1+d(x_n,p))^2\\
&\leq 4\lambda_nL^2(\xi_{n+1})(1+d^2(x_n,p)).
\end{align*}
In particular, there is a constant $C_{y,p}>0$ such that
\[
f(\xi_{n+1},x_n)-f(\xi_{n+1},x_{n+1})\leq C_{y,p}\lambda_nL^2(\xi_{n+1})(1+d^2(x_n,y))
\]
so that we have
\[
2\lambda_n\EE_n[f(\xi_{n+1},x_n)-f(\xi_{n+1},x_{n+1})]\leq 2C_{y,p}\lambda_n^2\underline{L}(1+d^2(x_n,y)),
\]
again using the independence of $\xi_{n+1}$ to $\mathsf{F}_n$ and $x_n$. Combined, we get
\[
\EE_n[d^2(x_{n+1},y)]\leq (1+2C_{y,p}\lambda_n^2\underline{L})d^2(x_n,y)-2\lambda_n(F(x_n)-F(y))+2C_{y,p}\lambda_n^2\underline{L}
\]
as claimed.
\end{proof}

Combining the previous approach for weak convergence of the deterministic proximal point method in Hadamard spaces given in the seminal paper \cite{Bacak2013} with the Lemmas \ref{weakConvergence} and \ref{LipschitzSum}, we can now give the following proof of Theorem \ref{RPPAweakConv}:

\begin{proof}[Proof of Theorem \ref{RPPAweakConv}]
Using Lemma \ref{IneqMain} for some $z\in\mathrm{argmin}F$, we get 
\[
\EE_n[d^2(x_{n+1},z)]\leq (1+2C_{z,p}\lambda_n^2\underline{L})d^2(x_n,z)-2\lambda_n(F(x_n)-\mathrm{min}F)+2C_{z,p}\lambda_n^2\underline{L}.\label{centralFejer}\tag{$\circ$}
\]
By the assumptions on the parameters \eqref{parameters}, we get $\sum_{n\in\mathbb{N}}\lambda_n^2<+\infty$ so that the assumptions of Lemma \ref{weakConvergence}, in particular \eqref{randFejer}, are met. Item (1) of that lemma implies
\[
\sum_{n\in\mathbb{N}}\lambda_n[F(x_n)-\mathrm{min}F]<+\infty\text{ a.s.}
\]
By assumption \eqref{growth}, we have
\[
F(x)-F(y)\leq \int L(e)\,d\mu(e)(1+d(x,p))d(x,y),
\]
so that for $\beta_n:=F(x_n)-\min F$, we have
\[
\beta_{n+1}-\beta_n= F(x_{n+1})-F(x_n)\leq \int L(e)\,d\mu(e) (1+d(x_{n+1},p))d(x_{n+1},x_n).
\]
Further recall from the proof of Lemma \ref{IneqMain} that $d(x_{n+1},x_n)\leq 2\lambda_nL(\xi_{n+1})(1+d(x_n,p))$, so that we get 
\[
\beta_{n+1}-\beta_n\leq \int L(e)\,d\mu(e) 2\lambda_{n}(1+d(x_{n+1},p))(1+d(x_{n},p))L(\xi_{n+1})
\]
for all $n\in\mathbb{N}$. Note that by Lemma \ref{unifBd} together with inequality \eqref{centralFejer} above, we get that $\sup_{n\in\mathbb{N}} d(x_n,z)<+\infty$ a.s. Hence, for $\gamma_n:=(1+d(x_{n+1},p))(1+d(x_{n},p))$ we in particular have 
\[
\sup_{n\in\mathbb{N}} \gamma_n\leq \left(\sup_{n\in\mathbb{N}} (1+d(x_n,p))\right)^2\leq \left(\sup_{n\in\mathbb{N}} (1+d(x_n,z)+d(z,p))\right)^2<+\infty\text{ a.s.}
\]
So, also using \eqref{parameters}, the assumptions of Lemma \ref{LipschitzSum} are met with $\beta_n$ and $\gamma_n$ as above, as well as $\theta:=2\int L(e)\,d\mu(e)$ and $\alpha_n:=L(\xi_{n+1})$, so that we obtain $\beta_n\to 0$ a.s., that is
\[
F(x_n)\to\min F\text{ a.s.},
\]
say on a set $\widehat{\Omega}$ with measure one. 

We now show that on that set $\widehat{\Omega}$, we also have $\mathfrak{W}(x_n)\subseteq \mathrm{argmin}F$. For that, let $\omega\in \widehat{\Omega}$ and let $x(\omega)$ be a weak cluster point of $(x_n(\omega))$, i.e.\ $x_{n_k}(\omega)\to^w x(\omega)$ for some subsequence $(x_{n_k}(\omega))$. Since it follows from Lemma \ref{weakLsc} that $F$ is weakly lsc, as discussed before, and since $\omega\in\widehat{\Omega}$, we have
\[
F(x(\omega))\leq \liminf_{k\to\infty} F(x_{n_k}(\omega))=\lim_{n\to\infty} F(x_n(\omega))=\mathrm{min}F
\]
and so $x(\omega)\in\mathrm{argmin}F$. As we thus have $\mathfrak{W}(x_n)\subseteq \mathrm{argmin}F$ a.s., item (4) of Lemma \ref{weakConvergence} implies that $(x_n)$ a.s.\ weakly converges to an $\mathrm{argmin}F$-valued random variable.
\end{proof}

This solves Problem 6.6 of Ba\v{c}\'ak \cite{Bacak2023}. As a corollary, we obtain from Theorem \ref{RPPAweakConv} the almost sure weak convergence of the splitting proximal point method with random order given in \cite{Bacak2014b}, that is Theorem \ref{SplitPPAweak}, which in particular solves Problem 6.4 of Ba\v{c}\'ak \cite{Bacak2023}.

\begin{proof}[Proof of Theorem \ref{SplitPPAweak}]
Consider the space $(E,\mathcal{E},\mu)$, where $E=\{1,\dots,N\}$, $\mathcal{E}=2^E$ and $\mu$ is the uniform distribution. On that space, define $f:E\times X\to (-\infty,+\infty]$ by $f(k,x):=f_k(x)$. The result then immediately follows from Theorem \ref{RPPAweakConv} under the corresponding assumptions \eqref{parameters} and \eqref{growth}, where the latter translates to the condition that $f_k(x) - f_k(y)\leq L(k)(1 + d(x,p))d(x,y)$ for all $k = 1,\dots,N$ and $x,y \in X$, for some $L(k) > 0$ (recall footnote \ref{splitPPAfootnote}).
\end{proof}

\noindent\textbf{Acknowledgements:} The author wants to thank Miroslav Ba\v{c}\'ak and Thomas Powell for helpful conversations on the topic of this paper. Further, he particularly wants to thank Morenikeji Neri, not only for many insightful discussions and references, but also for proof reading various parts of this paper.

The author was originally stuck on the proof of a previous version of Lemma \ref{LipschitzSum}, having essentially rederived the key parts of Proposition 2 from the work of Alber, Iusem and Solodov \cite{AlberIusemSolodov1998} (which was unknown to him at the time, but was later pointed out by Morenikeji Neri, for which he is grateful), but unsure how to control the involved series in a stochastic context and unsure about the precise formulation in general. This previous version of Lemma \ref{LipschitzSum} and its argument were streamlined and finalised by the author only after a series of interactions with OpenAI's GPT-5.5 embedded into Microsoft Copilot, but then later modified and extended without it, especially after having learned of \cite{AlberIusemSolodov1998}. The author takes full accountability for the work. In particular, no other parts of this paper were worked on or impacted by the use of LLMs in any way. No parts of this paper were written or worded by an LLM.\\

\noindent\textbf{Conflicts of interests:} The author has no relevant financial or non-financial interests to disclose.\\

\noindent\textbf{Data availability:} Data sharing not applicable to this article as no data sets were generated or analyzed.

\bibliographystyle{plain}
\bibliography{ref}

\end{document}